\newcounter{lletres}
\newtheorem{Talpha}[lletres]{Teorema}
\newtheorem{Lalpha}[lletres]{Lemma}
\newtheorem{Calpha}[lletres]{Corol-lari}
\newcommand{\RR}{{\mathbb R}}
\newcommand{\ZZ}{{\mathbb Z}}
\newcommand{\NN}{{\mathbb N}}
\newcommand{\Zplus}{{\mathbb {Z^+}}}
\numberwithin{equation}{section}
\begin{document}
\title{Ortoedres amb longitud d'arestes enteres / Cuboids with integer length edges}
\author{Daniel Blasi Babot}
\address{Daniel Blasi Babot}
\email{dblasibabot@gmail.com}

\begin{abstract}
L'objectiu d'aquest article és estudiar el nombre d'ortoedres diferents $\mathcal{O}(N)$ que es poden formar amb una quantitat arbitrària $N$ de cubs. A l'article s'obté un métode iteratiu per a calcular el valor de $\mathcal{O}(N)$ per $n\in\NN$ qualsevol. Utilitzant el métode s'obté una fórmula explícita quan $N$ és producte de dues potències de nombres primers diferents. També s'estudia el cas bidimensional i es dóna una fórmula general que determina el nombre de rectangles diferents que es poden formar amb una quantitat arbitrària $N$ de quadrats.
\end{abstract}

\maketitle

\section{Introducció}

Els ortoedres han estat llargament estudiats. Fixem-nos per exemple en l'estudi dels ortoedres racionals \cite{L1, L2}. Els ortoedres racionals estan caracteritzats per 7 nombres racionals positius (3 arestes diferents, 3 diagonals de les cares diferents i la diagonal interior). Els bricks d'Euler són ortoedres amb les arestes i les diagonals de les cares enteres \cite{D}. Si a més a més la diagonal principal de l'ortoedre és entera aleshores parlem d'un ortoedre perfecte.

En el nostre cas no treballem amb ortoedres perfectes. Ens interessa estudiar un cas més general d'ortoedres en què l'úncia restricció és que les longituds de les arestes siguin enteres. Donat un nombre $N\in\NN$ voldríem saber quants ortoedres diferents de volum $N$ existeixen que tinguin la longitud de les seves arestes enteres.\footnote{El problema d'estudiar el nombre d'ortoedres diferents que es poden formar amb $p^n\cdot q^m$ cubs el va proposar el Jordi Font Gonzàlez a partir d'una proposta d'investigació amb policubs.}

Donat un conjunt $\mathcal{A}$ qualsevol, definim 
  $$\sharp\mathcal{A} =\text{nombre d'elements del conjunt }\mathcal{A}.$$

Estudiarem primer el cas bidimensional.

\section{Recompte de rectangles}

Donat un nombre $N\in\NN$ qualsevol voldríem saber quants rectangles diferents es poden obtenir de manera que els seus costats tinguin longitud entera i el seu volum sigui $N$.\\

Sigui $N\in\NN$ i $x\in\RR$, definim

$$\mathcal{R}(N)=\sharp\{\text{Rectangles diferents d'àrea }N \text{ i longitud dels costats enteres}\},$$

$$d(N) =\sharp\{\text{Divisors del nombre } N\},$$

$$\Zplus=\{n\in\ZZ\text{ amb }n>0\},$$

$$[x] = \sup\{z\in\ZZ\text{ tal que }z\leq x\}.$$\\

Donat un nombre $N\in\NN$, el següent resultat ens dóna una fórmula per $\mathcal{R}(N)$.\\

\begin{Talpha}
Sigui $N=p_1^{m_1}\cdots p_n^{m_n}$ amb $\{p_i\}_{i=1,...,n}$ nombres primers diferents i $m_i\in\Zplus$ per $i=1,...,n.$ Aleshores
$$\mathcal{R}(N)=\left\{
\begin{array}{lll}
\displaystyle\frac{(m_1+1)\cdots (m_n+1)+1}{2}&\text{ si }&N=t^2\text{ per algun }t\in\ZZ\\\\
\displaystyle\frac{(m_1+1)\cdots (m_n+1)}{2}&\text{ si }&N=t^2+1\text{ per algun }t\in\ZZ
\end{array}
\right.
$$\\
\end{Talpha}

\begin{proof}
Fixem-nos que les dimensions d'un rectangle qualsevol d'àrea $N$ venen determinades per dos divisors $a,b$ de $N$ amb $a\cdot b=N.$ Així doncs el teorema quedarà provat si veiem que  $d(N)=(m_1+1)\cdots (m_n+1)$. \\

Fixem-nos que $d(p_1^{m_1})=m_1+1,\;$ ja que els divisors de $p_1^{m_1}$ són $1,\;p_1,\;p_1^2,...,p_1^{m_1}$. \\

Observem que $d(p_1^{m_1}\cdot p_2^{m_2})=d(p_1^{m_1})\cdot d(p_2^{m_2}).$ En efecte, qualsevol divisor de $p_1^{m_1}\cdot p_2^{m_2}$ és producte d'un divisor de $p_1^{m_1}$ per un divisor de $p_2^{m_2}$ ja que $p_1$ i $p_2$ són nombres primers diferents. Així doncs $d(p_1^{m_1}\cdot p_2^{m_2})=(m_1+1)\cdot(m_2+1).$\\

Per recurrència podem extendre el raonament i obtenim el resultat desitjat.\\
\end{proof}

\section{Recompte d'ortoedres}

Donat un nombre $N\in\NN$ voldríem saber quants ortoedres diferents podem construir amb longitud de les arestes enteres i volum $N$.\\

Definim 
$$\mathcal{O}(N)=\sharp\{\text{Ortoedres diferents de volum }N\text{ i longitud de les arestes enteres}\}.$$

Direm que $(A,B,C)$ és un ortoedre de volum $N$ si $A\cdot B\cdot C=N.$ Pensarem que les tripletes $(A,B,C),\; (A,C,B),\; (B,A,C),\; (B,C,A),\; (C,A,B)$ i $(C,B,A)$ representen el mateix ortoedre. \\



Així doncs, el problema que volem estudiar es pot reformular de la següent manera: donat un nombre $N\in\NN$ qualsevol, voldríem saber quantes tripletes diferents $(A,B,C)$ existeixen amb $A,\; B,\; C\in \Zplus,$ $A\leq B\leq C$ i $A\cdot B\cdot C=N.$ \\

El següent resultat ens dóna un métode recursiu per a calcular $\mathcal{O}(N),$ partint de la descomposició en factors primers d'un nombre $N\in\NN$ qualsevol.

\begin{Talpha}\label{maintheorem}
Si $N=p_1^{n_1}\cdot ...\cdot p_k^{n_k},$ amb $n_j=6s_j+\alpha_j,$ per $j=1,...,k$, aleshores
$$\mathcal{O}(N)=f_{\alpha^k}(s^k)+g_{\alpha^k}(s^k)+h_{\alpha^k}(s^k),$$
on $\alpha^k=(\alpha_1,...,\alpha_k),$ $s^k=(s_1,...,s_k)$ i $f_{\alpha^k}(s^k),\;g_{\alpha^k}(s^k),\;h_{\alpha^k}(s^k)$ són les funcions definides a \eqref{eqnfk,gk,hk}.
\end{Talpha}

A la secció \ref{section exemples} trobarem un exemple de com aplicar aquest resultat per calcular $\mathcal{O}(N)$ en un cas concret. Així mateix, el següent resultat mostra una fórmula explícita per  $\mathcal{O}(N)$, quan $N$ és producte de dues potències de nombres primers diferents, utilitzant el mètode recursiu del Teorema \ref{maintheorem}.\\

\begin{Talpha}\label{theorempq}
Sigui $N=p^n\cdot q^m$ amb $p\neq q$ nombres primers i $n,m\in\Zplus,$ sigui
    $$w(n,m)=\frac{2n^2+2m^2+12nm+3n^2m+3nm^2+n^2m^2}{24},$$
aleshores
\begin{itemize}
 \item si $n\equiv 0\mod{6}$,
  $$\def\arraystretch{2.2}\begin{array}{rll}
     \mathcal{O}(N)=&\displaystyle\frac{24+12n+12m}{24}+w(n,m)
        &\text{ si } m\equiv 0\mod(6)\\
     \mathcal{O}(N)=&\displaystyle\frac{(n+2)\cdot(m+1)\cdot(nm+2n+m+5)}{24}&\text{ si }
          m\equiv 1\text{ o } 5\mod(6)\\  
     \mathcal{O}(N)=&\displaystyle\frac{(n+2)\cdot(m+2)\cdot(nm+n+m+4)}{24}&\text{ si }
          m\equiv 2\text{ o } 4\mod(6)\\ 
     \mathcal{O}(N)=&\displaystyle \frac{18+9n+12m}{24}+w(n,m)&\text{ si }
          m\equiv 3 \mod(6)\\  
  \end{array}$$
 \item si $n\equiv 1\text{ o }n\equiv 5\mod{6},$
  $$\def\arraystretch{2.2}\begin{array}{rll}
     \mathcal{O}(N)=&\displaystyle\frac{(n+1)\cdot(m+2)\cdot(nm+n+2m+5)}{24}&
         \text{ si }m\equiv 0,\; 2\text{ o } 4\mod(6)\\
     \mathcal{O}(N)=&\displaystyle\frac{(n+1)\cdot(m+1)\cdot(nm+2n+2m+7)}{24}&
         \text{ si }m\equiv 1,\; 3\text{ o } 5\mod(6)\\
  \end{array}$$

 \item si $n\equiv 2\text{ o }n\equiv 4\mod{6},$
  $$\def\arraystretch{2.2}\begin{array}{rll}
     \mathcal{O}(N)=&\displaystyle\frac{(n+2)\cdot(m+2)\cdot(nm+n+m+4)}{24}&
         \text{ si }m\equiv 0,\; 2\text{ o } 4\mod(6)\\
     \mathcal{O}(N)=&\displaystyle\frac{(n+2)\cdot(m+1)\cdot(nm+2n+m+5)}{24}&
         \text{ si }m\equiv 1,\; 3\text{ o } 5\mod(6)\\
  \end{array}$$

 \item si $n\equiv 3\mod{6},$
  $$\def\arraystretch{2.2}\begin{array}{rll}
     \mathcal{O}(N)=&\displaystyle \frac{18+12n+9m}{24}+w(n,m)&\text{ si }m\equiv 0 \mod(6)\\
     \mathcal{O}(N)=&\displaystyle\frac{(n+1)\cdot(m+1)\cdot(nm+2n+2m+7)}{24}&
         \text{ si }m\equiv 1\text{ o } 5\mod(6)\\
     \mathcal{O}(N)=&\displaystyle\frac{(n+1)\cdot(m+2)\cdot(nm+n+2m+5)}{24}&
         \text{ si }m\equiv 2\text{ o } 4\mod(6)\\
     \mathcal{O}(N)=&\displaystyle \frac{15+9n+9m}{24}+w(n,m)&\text{ si }
          m\equiv 3 \mod(6)\\
  \end{array}$$
   
\end{itemize}
\end{Talpha}

El cas $m=0$ del teorema anterior ens dóna una fórmula per $\mathcal{O}(N)$ quan $N$ és una potència d'un nombre primer.

\begin{Calpha}\label{corollaryp^n}
Sigui $N=p^n$ amb $p$ un nombre primer i $n\in\Zplus,$ aleshores
$$\displaystyle\mathcal{O}(N)=\left\{
\begin{array}{lll}
\displaystyle\frac{n^2+6n+12}{12}&\text{ si }&n\equiv 0\mod{ 6 }\\\\
\displaystyle\frac{n^2+6n+5}{12}&\text{ si }&n\equiv 1\mod{ 6 } \text{ o }\;n\equiv 5\mod{ 6 }\\\\
\displaystyle\frac{n^2+6n+8}{12}&\text{ si }&n\equiv 2\mod{ 6 } \text{ o }\;n\equiv 4\mod{ 6 }\\\\
\displaystyle\frac{n^2+6n+9}{12}&\text{ si }&n\equiv 3\mod{ 6 }
\end{array}
\right.
$$\\
\end{Calpha}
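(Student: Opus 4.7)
The plan is to deduce the formula directly from Theorem \ref{theorempq} by taking the degenerate case $m=0$, corresponding to the identification $N=p^{n}\cdot q^{0}=p^{n}$. Since the theorem is stated for $n,m\in\Zplus$, the first (and only substantive) step is to justify that its formulas remain valid at $m=0$. One route is to re-examine the recursion of Theorem \ref{maintheorem} and confirm that its base case on a single prime power coincides with the polynomial expressions of Theorem \ref{theorempq} specialised at $m=0$. A more self-contained alternative is to observe that any ortoedre of volume $p^{n}$ has edge-triple $(p^{a},p^{b},p^{c})$ with $0\leq a\leq b\leq c$ and $a+b+c=n$, so that $\mathcal{O}(p^{n})$ equals the number of partitions of $n$ into at most three parts; one then checks each of the four candidate formulas against the known closed form for that partition count.

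Once this preliminary issue is settled, the remainder of the argument is a routine calculation split into the four residue classes of $n$ modulo $6$. In each case we substitute $m=0$ (so $m\equiv 0\pmod{6}$) into the branch of Theorem \ref{theorempq} that treats $m\equiv 0\pmod{6}$. The key simplification is $w(n,0)=2n^{2}/24=n^{2}/12$. For $n\equiv 0\pmod{6}$ the additive branch gives $(24+12n)/24+n^{2}/12=(n^{2}+6n+12)/12$; for $n\equiv 1$ or $5\pmod{6}$ the multiplicative branch yields $2(n+1)(n+5)/24=(n^{2}+6n+5)/12$; for $n\equiv 2$ or $4\pmod{6}$ one obtains $2(n+2)(n+4)/24=(n^{2}+6n+8)/12$; and for $n\equiv 3\pmod{6}$ the formula collapses to $(18+12n)/24+n^{2}/12=(n^{2}+6n+9)/12$.

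The only genuine obstacle is therefore the legitimacy of applying Theorem \ref{theorempq} at the boundary $m=0$; every subsequent step reduces to elementary algebra. In writing up the proof, I would emphasise the partition-counting interpretation, as it both licences the substitution $m=0$ and renders the identity transparent: the four expressions in the corollary are exactly the piecewise closed form for the number of partitions of $n$ into at most three parts, selected by the residue $n\bmod 6$.
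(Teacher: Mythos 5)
Your proposal is correct and follows essentially the same route as the paper, which simply declares the corollary to be the case $m=0$ of Theorem \ref{theorempq} and offers no further argument; your algebraic verifications ($w(n,0)=n^{2}/12$, and the four branch substitutions) all check out. You are in fact more careful than the paper: you flag that the theorem is stated for $m\in\Zplus$ while the corollary needs $m=0$ (the proof in Section \ref{sectionprooftheorempq} is actually carried out for $n,m\in\NN$, so the substitution is legitimate), and your observation that the result is the classical count of partitions of $n$ into at most three parts is a valid independent confirmation.
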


Si $N$ és producte de nombres primers diferents, el següent resultat ens dóna una fórmula per  $\mathcal{O}(N).$

\begin{Talpha}
Sigui $N=p_1 \cdots p_n$ amb $\{p_i\}_{i=1,...,n}$ nombres primers diferents, aleshores
$$\mathcal{O}(N)=\frac{3^{n-1}+1}{2}$$\\
\end{Talpha}

\begin{proof}
Volem comptar el nombre de tripletes $(A,B,C)$ amb $A,B,C\in\Zplus$, $A\cdot B \cdot C=N$ i $A\leq B\leq C.$ \\

Com que $N=p_1\cdot...\cdot p_n,$ per comptar totes les tripletes possibles pensem que cadascun dels factors $p_i$ per $i=1,\cdots,n$ pot estar inclós en la factorització de $A,$ de $B$ o de $C$. Tenim doncs $3^n$ possibilitats, però algunes d'elles no compleixen $A\leq B\leq C$. \\

Cada ortoedre $(A,B,C)$  l'estem comptant 6 vegades si els tres valors $A,\; B$ i $C$ són diferents: $(A,B,C)$, $(A,C,B)$, $(B,A,C)$, $(B,C,A)$, $(C,A,B)$ i $(C,B,A)$. \\

Si a dues lletres els correspon el valor 1 tenim tres repeticions: $(1,1,N)$, $(1,N,1)$ i $(N,1,1)$. Per tant,
$$\mathcal{O}(N)=\frac{3^n-3}{6}+1=\frac{3^{n-1}-1}{2}+1=\frac{3^{n-1}+1}{2}$$
i el resultat queda provat.
\end{proof}

\section{Prova del teorema \ref{theorempq}}\label{sectionprooftheorempq}

Si $N=p^n\cdot q^m$ amb $p\neq q$ nombres pirmers i $n,m\in\NN$ llavors 
\begin{equation}\label{eqnO(N)}
\mathcal{O}(N)=\sharp\{(p^i\cdot q^k,p^j\cdot q^l,p^{n-i-j}\cdot q^{m-k-l})\}
\end{equation}
amb $i,j,k,l\in\Zplus$, $i+j\leq n,$ $k+l\leq m$.\\

Donats dos ortoedres $(A,B,C)$ i $(D,E,F)$ definim 
	$$\begin{array}{ll}
	(A,B,C)\otimes(D,E,F)=\{&\!\!\!\!(A\cdot D,B\cdot E,C\cdot F),\;(A\cdot D,B\cdot F,C\cdot E),\\
	&\!\!\!\!(A\cdot E,B\cdot D,C\cdot F),\;	(A\cdot E,B\cdot F,C\cdot D),\\
	&\!\!\!\!(A\cdot F,B\cdot D,C\cdot E),\;(A\cdot F,B\cdot E,C\cdot D)\;\,\},
	\end{array}$$
és a dir, el conjunt d'ortoedres que tenen en cada coordenada el producte d'una coordenada del primer ortoedre i una coordenada del segon ortoedre sense repetir-les.\\

\begin{Lalpha}\label{lemmaproducte}
Donats $A,B,C,D,E,F\in\NN,$ considerem el conjunt d'ortoedres
$$\mathcal{A}=\{(A,B,C)\otimes(D,E,F)\}.$$
Aleshores
$$\sharp \mathcal{A} = \left\{\begin{array}{lll}
		6 & \text{si}&A,B,C\text{ diferents i }\;D,E,F\text{ diferents}\\
		3 & \text{si}&\text{dos elements de }A,B,C\text{ iguals i }\;D,E,F\text{ diferents o viceversa}\\
		2 & \text{si}&\text{dos elements de }A,B,C\text{ iguals i }\;\text{dos elements de }D,E,F\text{ iguals}\\
		1 & \text{si}&A=B=C\text{ o bé }\;D=E=F.\\	
					  \end{array}\right.$$\\
\end{Lalpha}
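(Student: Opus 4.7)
The plan is a case analysis on the equality patterns among $A,B,C$ and among $D,E,F$. First I would observe that the six triples listed in the definition of $(A,B,C)\otimes(D,E,F)$ are precisely the ordered triples $(A\sigma(D),\,B\sigma(E),\,C\sigma(F))$ as $\sigma$ ranges over the six permutations of $(D,E,F)$. Since an ortoedre is an unordered triple, $\sharp\mathcal{A}$ counts the number of pairwise distinct multisets $\{A\sigma(D),\,B\sigma(E),\,C\sigma(F)\}$ so obtained, and the four cases of the lemma correspond exactly to the symmetry types of the pair $(\{A,B,C\},\{D,E,F\})$.

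The extreme cases are immediate. If $A=B=C$, every $\sigma$ yields the same multiset $\{AD,AE,AF\}$, whence $\sharp\mathcal{A}=1$; the case $D=E=F$ is symmetric. If $A=B\neq C$ and $D=E\neq F$, a direct enumeration of the six products collapses them onto exactly two multisets, $\{AD,AD,CF\}$ and $\{AD,AF,CD\}$, which are distinct because $A\neq C$ and $D\neq F$, giving $\sharp\mathcal{A}=2$.

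For the mixed case, say $A=B\neq C$ with $D,E,F$ pairwise distinct (the symmetric situation follows by interchanging the roles of the two ortoedres, since $(A,B,C)\otimes(D,E,F)$ and $(D,E,F)\otimes(A,B,C)$ agree as collections of unordered triples), the equality $A=B$ pairs up the six ordered triples two by two, yielding at most three multisets, namely $\{AD,AE,CF\}$, $\{AD,AF,CE\}$ and $\{AE,AF,CD\}$. Pairwise distinctness follows directly from the distinctness of $D,E,F$: any coincidence among these three multisets forces one of the equalities $D=E$, $D=F$ or $E=F$, contradicting the hypothesis. Hence $\sharp\mathcal{A}=3$.

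The delicate case, and the step I expect to require the most care, is when $A,B,C$ are pairwise distinct and $D,E,F$ are pairwise distinct. Here the goal is to show that all six multisets $\{A\sigma(D),B\sigma(E),C\sigma(F)\}$ are pairwise distinct. My approach is to fix $\sigma=\mathrm{id}$ and, for each of the five non-identity permutations $\tau$, write down the system of three multiplicative equalities that a hypothetical equality of multisets would force via some permutation $\pi\in S_3$ of the coordinates, and then rule out each subcase using the distinctness hypotheses. The main obstacle is the bookkeeping: one has to inspect a handful of systems of multiplicative equations, and in each subcase extract an equality of the form $A=B$, $B=C$, $A=C$, $D=E$, $E=F$ or $D=F$ to reach a contradiction.
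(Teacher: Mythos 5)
Your treatment of the first three cases is correct, and considerably more careful than the paper's own proof, which simply declares the whole lemma immediate: your ``remove the common element'' reductions to two-element multiset identities really do force one of the forbidden equalities, and the counts $1$, $2$ and $3$ hold for arbitrary naturals. The problem is exactly the case you flag as delicate. When $A,B,C$ are pairwise distinct and $D,E,F$ are pairwise distinct, it is \emph{not} true that every coincidence among the six multisets forces one of the equalities $A=B$, $A=C$, $B=C$, $D=E$, $D=F$, $E=F$: a coincidence can also be realized by a cyclic matching, e.g.\ $\{AE,BF,CD\}=\{AF,BD,CE\}$ via the three multiplicative relations $AE=BD$, $BF=CE$, $CD=AF$, which are jointly satisfiable with all six numbers in ``general position'' (they amount to $A/B=D/E$ and $B/C=E/F$). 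Concretely, for $(A,B,C)=(1,2,4)$ and $(D,E,F)=(1,2,4)$ the six products are $\{1,4,16\}$, $\{1,8,8\}$, $\{2,2,16\}$, $\{2,4,8\}$, $\{2,4,8\}$ and $\{4,4,4\}$, so $\sharp\mathcal{A}=5$. Hence the statement, read literally for arbitrary $A,\dots,F\in\NN$, is false, and no amount of bookkeeping in your last step can close the argument.

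The missing idea is the hypothesis under which the lemma is actually applied in Section \ref{sectionprooftheorempq}: there $(A,B,C)$ are powers of a single prime $p$ and $(D,E,F)$ are powers of a different prime $q$. Unique factorization then converts any equality $p^aq^d=p^{a'}q^{d'}$ into $a=a'$ and $d=d'$, so every coincidence of entries splits into a coincidence among $A,B,C$ together with one among $D,E,F$; equivalently, the multiset of exponent pairs $\{(a,\sigma(d)),(b,\sigma(e)),(c,\sigma(f))\}$ determines the permutation $\sigma$ once $a,b,c$ and $d,e,f$ are pairwise distinct, which rules out the cyclic matchings above. With that hypothesis added to the statement, your case analysis is complete and yields the counts $6,3,2,1$ exactly as you outline; without it, the fourth case of the lemma needs to be weakened to ``at most $6$''.
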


\begin{proof}
La prova és immediata tenint en compte que 
$$(A,B,C),\;(A,C,B),\;(B,A,C),\;(B,C,A),\;(C,A,B)\text{ i }(C,B,A)$$
representen el mateix ortoedre, per qualssevol $A,B,C\in\NN.$\\
\end{proof}

Donats dos conjunts $\mathcal{A}$ i $\mathcal{B}$ d'ortoedres definim el conjunt
$$\mathcal{A}\otimes\mathcal{B}=\Big\{(A,B,C)\otimes(D,E,F)\Big\}_{
          \begin{array}{l}
              (A,B,C)\in\mathcal{A}\\
              (D,E,F)\in\mathcal{B}\;.
          \end{array}}$$

Siguin

$$\mathcal{A}=\Big\{(p^i,p^j,p^{n-i-j})\Big\}_{\begin{array}{l}
					i=0,...,[\frac{n}{3}]\\
					j=i,...,\left[\frac{n-i}{2}\right]
			   \end{array}}$$

i

$$\mathcal{B}=\Big\{(q^k,q^l,q^{m-k-l})\Big\}_{\begin{array}{l}
					k=0,...,[\frac{m}{3}]\\
					l=k,...,\left[\frac{m-k}{2}\right]
			   \end{array}},$$\\

dos conjunts d'ortoedres, aleshores \eqref{eqnO(N)} es pot reescriure com

    $$\mathcal{O}(N)=\sharp \{\mathcal{A}\otimes\mathcal{B}\},$$\\

o equivalentment, \\

  $$\mathcal{O}(N)=\sum_{(A,B,C)\in\mathcal{A}}\;\;\sum_{(D,E,F)\in\mathcal{B}}\sharp\{(A,B,C)\otimes(D,E,F)\}.$$\\

Si $p$ és un nombre primer i $n\in\NN$, podem escriure $n=6s+\alpha$ amb $s\in\NN$ i $\alpha=0,1,2,3,4,5.$ Definim\\

$$\mathcal{A}^{\alpha}(s)=\Big\{(p^i,p^j,p^{6s+\alpha-i-j})\Big\}_{\def\arraystretch{1.5}
			   \begin{array}{l}
					i=0,...,\left[\frac{6s+\alpha}{3}\right]\\
					j=i,...,\left[\frac{6s+\alpha-i}{2}\right].
			   \end{array}}$$

Dividim el conjunt $\mathcal{A}^{\alpha}(s)$ en dos subconjunts disjunts
$$\mathcal{A}^{\alpha}(s)=\mathcal{A}_o^{\alpha}(s)\uplus\mathcal{A}_e^{\alpha}(s),$$

on

$$\mathcal{A}_o^{\alpha}(s)=\Big\{(p^{2u},p^j,p^{6s+\alpha-2u-j})\Big\}_{\def\arraystretch{1.3}
               \begin{array}{l}
					u=0,...,s\\
					j=2u,...,3s-u+\left[\frac{\alpha}{2}\right]
			   \end{array}}$$
			   
correspon al conjunt de ternes de $\mathcal{A}^{\alpha}(s)$ amb la primera coordenada una potència d'exponent parell i 

$$\mathcal{A}_e^{\alpha}(s)=\Big\{(p^{2u+1},p^j,p^{6s+\alpha-2u-1-j})\Big\}_{\def\arraystretch{1.3}
               \begin{array}{l}
					u=0,...,\left[\frac{6s+\alpha-3}{6}\right]\\
					j=2u+1,...,3s-u+\left[\frac{\alpha-1}{2}\right]
			   \end{array}}$$
			   
correspon al conjunt de ternes de $\mathcal{A}^{\alpha}(s)$ amb la primera coordenada una potència d'exponent senar. \\

Considerem una altra partició disjunta de $\mathcal{A}^{\alpha}(s)$:

\begin{equation}\label{eqnA_i}
    \mathcal{A}^{\alpha}(s)=
    \mathcal{A}_1^{\alpha}(s)\uplus\mathcal{A}_2^{\alpha}(s)\uplus\mathcal{A}_3^{\alpha}(s),
\end{equation}

on

  $$\def\arraystretch{2}
  \begin{array}{l}
     \mathcal{A}_1^{\alpha}(s)=\big\{(p^i,p^i,p^i)\big\}_{3i=6s+\alpha}\\
     \mathcal{A}_2^{\alpha}(s)=\mathcal{A}_{2'}^{\alpha}(s)\uplus\mathcal{A}_{2''}^{\alpha}(s)
         \text{ amb}\\
          \quad\quad\mathcal{A}_{2'}^{\alpha}(s)=\big\{(p^i,p^i,p^{6s+\alpha-2i})\big\}_
                 {0\leq i<6s+\alpha-2i}\\
          \quad\quad\mathcal{A}_{2''}^{\alpha}(s)=\big\{(p^i,p^j,p^j)\big\}_
                 {i+2j=6s+\alpha,\;0\leq i<j}\\
     \mathcal{A}_3^{\alpha}(s)=\big\{(p^i,p^j,p^{6s+\alpha-i-j})\big\}_{0\leq i<j<6s+\alpha-i-j}    .
  \end{array}$$\\

Fixem-nos que $\mathcal{A}_1^{\alpha}(s)$ correspon a les ternes de $\mathcal{A}^{\alpha}(s)$ amb les tres coordenades iguals, $\mathcal{A}_2^{\alpha}(s)$ a les ternes de $\mathcal{A}^{\alpha}(s)$ amb dues coordenades iguals i una diferent i $\mathcal{A}_3^{\alpha}(s)$ correspon a les ternes de $\mathcal{A}^{\alpha}(s)$ amb les tres coordenades diferents. Definim 

\begin{equation}\label{eqn fgh}
f_{\alpha}(s)=\sharp\mathcal{A}_1^{\alpha}(s),\quad g_{\alpha}(s)=\sharp\mathcal{A}_2^{\alpha}(s)\quad\text{i}\quad h_{\alpha}(s)=\sharp\mathcal{A}_3^{\alpha}(s).
\end{equation}\\

\begin{Lalpha}\label{Lemafgh}
Donat un nombre primer $p$ i un nombre $n\in\NN,$ escrivim $n=6s+\alpha$ amb $\alpha\in\NN$ i $s=0,1,2,3,4,5$. Sigui
$$\mathcal{A}^{\alpha}(s)=\Big\{(p^i,p^j,p^{6s+\alpha-i-j})\Big\}_{\def\arraystretch{1.5}
			   \begin{array}{l}
					i=0,...,\left[\frac{6s+\alpha}{3}\right]\\
					j=i,...,\left[\frac{6s+\alpha-i}{2}\right].
			   \end{array}}.$$
Aleshores,

\begin{equation}\label{eqnfgh,k=1}
\def\arraystretch{1.5}\begin{array}{rlll}
      \text{si}\quad\alpha=0,&f_0(s)=1&g_0(s)=3s&h_0(s)=3s^2\\
      \text{si}\quad\alpha=1,&f_1(s)=0&g_1(s)=3s+1&h_1(s)=3s^2+s\\
      \text{si}\quad\alpha=2,&f_2(s)=0&g_2(s)=3s+2&h_2(s)=3s^2+2s\\
      \text{si}\quad\alpha=3,&f_3(s)=1&g_3(s)=3s+1&h_3(s)=3s^2+3s+1\\
      \text{si}\quad\alpha=4,&f_4(s)=0&g_4(s)=3s+3&h_4(s)=3s^2+4s+1\\
      \text{si}\quad\alpha=5,&f_5(s)=0&g_5(s)=3s+3&h_5(s)=3s^2+5s+2,\\
  \end{array}
\end{equation}\\
$\text{on}\quad 
f_{\alpha}(s)=\sharp\mathcal{A}_1^{\alpha}(s),\quad 
g_{\alpha}(s)=\sharp\mathcal{A}_2^{\alpha}(s)\quad \text{i}\quad 
h_{\alpha}(s)=\sharp\mathcal{A}_3^{\alpha}(s),\quad\text{per}\quad\alpha=0,...,5.$\\

%
%
%
\end{Lalpha}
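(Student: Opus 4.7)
The plan is to verify the six rows of (\ref{eqnfgh,k=1}) by handling $f_\alpha$, $g_\alpha$, and $h_\alpha$ separately: the first two by direct inspection of the defining constraints, and the last by a bijective reduction to a classical partition count.

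The value $f_\alpha(s)$ is immediate from the definition: $\mathcal{A}_1^\alpha(s)$ contains the single triple $(p^i,p^i,p^i)$ with $i=(6s+\alpha)/3$ exactly when $3\mid 6s+\alpha$ (i.e.\ when $\alpha\in\{0,3\}$), and is empty otherwise. For $g_\alpha(s)$ I would exploit the splitting $\mathcal{A}_2^\alpha(s)=\mathcal{A}_{2'}^\alpha(s)\uplus\mathcal{A}_{2''}^\alpha(s)$ provided in the setup: the elements of $\mathcal{A}_{2'}^\alpha(s)$ are parametrized by integers $i\ge 0$ with $3i<6s+\alpha$, contributing $\lceil(6s+\alpha)/3\rceil$ triples; and the elements of $\mathcal{A}_{2''}^\alpha(s)$, after writing $i=6s+\alpha-2j$ and imposing $0\le i<j$, correspond to integers $j$ in the interval $\bigl((6s+\alpha)/3,\;(6s+\alpha)/2\bigr]$. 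A short case analysis on $\alpha\bmod 6$ evaluates these two counts, and their sum yields the stated values of $g_\alpha(s)$.

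For $h_\alpha(s)$ the cleanest route is the substitution $(i,j,k)\mapsto(i,j-1,k-2)$, which is a bijection between $\mathcal{A}_3^\alpha(s)$ and the set of weakly increasing triples $(a,b,c)$ of non-negative integers with $a+b+c=6s+\alpha-3$ (the strict inequalities $i<j<k$ becoming the weak inequalities $a\le b\le c$ after the shift). The cardinality of this latter set is the number of partitions of $n=6s+\alpha-3$ into at most three parts, which has the classical closed form equal to the nearest integer to $(n+3)^2/12$. Substituting $n=6s+\alpha-3$ and evaluating for $\alpha=0,\ldots,5$ produces the six quadratic expressions claimed for $h_\alpha(s)$. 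A more elementary alternative, if one prefers not to invoke the partition closed form, is to write $h_\alpha(s)=\sum_{i\ge 0}\max\bigl(0,\;\lfloor(6s+\alpha-i-1)/2\rfloor-i\bigr)$ and evaluate each of the six resulting sums by splitting on the parity of $i$.

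The main obstacle is purely bookkeeping: the floor and ceiling functions must be tracked carefully across the six residues $\alpha\bmod 6$, and one must be careful at the boundary cases $i=j$ and $j=k$ so that triples with two equal coordinates are not double-counted between $\mathcal{A}_1^\alpha(s)$, $\mathcal{A}_{2'}^\alpha(s)$, $\mathcal{A}_{2''}^\alpha(s)$, and $\mathcal{A}_3^\alpha(s)$. Once these edge cases are handled, the argument reduces to routine arithmetic for each of the six values of $\alpha$.
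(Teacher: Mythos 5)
Your proposal is correct, and it agrees with the paper on $f_\alpha$ and $g_\alpha$: the paper also reads off $f_\alpha(s)$ from the divisibility condition $3\mid 6s+\alpha$ and counts $\mathcal{A}_{2'}^{\alpha}(s)$ and $\mathcal{A}_{2''}^{\alpha}(s)$ separately by the same parametrizations you describe. Where you genuinely diverge is in the treatment of $h_\alpha$. The paper never counts $\mathcal{A}_3^{\alpha}(s)$ directly: it first computes the total $\sharp\mathcal{A}^{\alpha}(s)$ by splitting $\mathcal{A}^{\alpha}(s)$ into the subsets $\mathcal{A}_o^{\alpha}(s)$ and $\mathcal{A}_e^{\alpha}(s)$ according to the parity of the first exponent, evaluates two double sums for each $\alpha$, and then obtains $h_\alpha(s)=\sharp\mathcal{A}^{\alpha}(s)-f_\alpha(s)-g_\alpha(s)$ by subtraction. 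Your route, the shift $(i,j,k)\mapsto(i,j-1,k-2)$ onto partitions of $6s+\alpha-3$ into at most three parts followed by the nearest-integer formula $\bigl[(n+3)^2/12\bigr]$, is shorter and more conceptual, and I checked that it reproduces all six quadratics; its only costs are that you must either quote or reprove the classical closed form, and that the bijection presupposes $6s+\alpha\ge 3$, so the cases $s=0$, $\alpha\le 2$ need the separate (trivial) observation that $\mathcal{A}_3^{\alpha}(0)=\emptyset$ there, matching the paper's own reduction to $s>0$. Your fallback sum $\sum_{i\ge 0}\max\bigl(0,\lfloor(6s+\alpha-i-1)/2\rfloor-i\bigr)$ is also valid and is closer in spirit to the paper's summation, though organized by the value of $i$ rather than by its parity. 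Either way the bookkeeping you flag (disjointness of $\mathcal{A}_1$, $\mathcal{A}_{2'}$, $\mathcal{A}_{2''}$, $\mathcal{A}_3$ at the boundary cases) is exactly the point the paper's partition \eqref{eqnA_i} is designed to handle, so no gap remains.
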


\begin{proof}

Podem suposar sense pèrdua de generalitat qu $s>0$ ja que en aquest cas el resultat és obvi. Anem a diferenciar la prova pels diferents valors de $\alpha$.

\begin{itemize}

  \item Si $\alpha=0,$ és a dir, $n=6\alpha,$

      $$\mathcal{A}^{0}(s)=\Big\{(p^i,p^j,p^{6s-i-j})\Big\}_{\def\arraystretch{1.3}
	      \begin{array}{l}
					i=0,...,2s\\
					j=i,...,\left[\frac{6s-i}{2}\right].
	      \end{array}}$$

Ara,

$$\mathcal{A}_o^{0}(s)=\Big\{(p^{2u},p^j,p^{6s-2u-j})\Big\}_{
               \begin{array}{l}
					u=0,...,s\\
					j=2u,...,3s-u
			   \end{array}}$$
			   
i per tant

$$ \displaystyle\sharp\mathcal{A}_o^{0}(s)=\sum_{u=0}^s\;\sum_{j=2u}^{3s-u}1=
               \sum_{u=0}^s 3s-3u+1=3s^2+4s+1-\frac{3s^2+3s}{2}.$$

Fixem-nos que 

$$\mathcal{A}_e^{0}(s)=\Big\{(p^{2u+1},p^j,p^{6s-2u-1-j})\Big\}_{
               \begin{array}{l}
					u=0,...,s-1\\
					j=2u+1,...,3s-u-1
			   \end{array}}$$

i per tant

  $$\displaystyle\sharp\mathcal{A}_e^{0}(s)=\sum_{u=0}^{s-1}\;\sum_{j=2u+1}^{3s-u-1}1=
               \sum_{u=0}^{s-1} 3s-3u+1=3s^2-s+\frac{-3s^2+3s}{2}.$$

Així doncs tenim que

$$\sharp{A}^0(s)=\sharp{A}_o^0(s)+\mathcal{A}_e^0(s)=3s^2+3s+1.$$

Considerant la partició \eqref{eqnA_i} de $\mathcal{A}^0(s)$ 
$$\def\arraystretch{2}\begin{array}{l}
  \mathcal{A}_1^{0}(s)=\big\{(p^i,p^i,p^i)\big\}_{3i=6s}=
           \big\{(p^{2s},p^{2s},p^{2s})\big\}\\
           
  \mathcal{A}_{2'}^{0}(s)=\big\{(p^i,p^i,p^{6s-2i})\big\}_
                 {i=0,...,2s-1}\\
                 
  \mathcal{A}_{2''}^{0}(s)=\big\{(p^i,p^j,p^j)\big\}_
                 {i+2j=6s,\;0\leq i<j}=\big\{(p^{2u},p^j,p^j)\big\}_{u=0,...,s-1}\\
                 
  \mathcal{A}_3^{0}(s)=\big\{(p^i,p^j,p^{6s-i-j})\big\}_{0\leq i<j<6s-i-j}\\
\end{array}$$
tenim que
$$\def\arraystretch{2}\begin{array}{l}
       \sharp\mathcal{A}^0_1(s)=1,\\
       \sharp\mathcal{A}^0_2(s)=\sharp\mathcal{A}^0_{2'}(s)+\sharp\mathcal{A}^0_{2''}(s)=(2s)+(s)=3s\\
       \sharp\mathcal{A}^0_3(s)=
         \sharp\mathcal{A}^0(s)-\sharp\mathcal{A}^0_1(s)-\sharp\mathcal{A}^0_2(s)=
         (3s^2+3s+1)-(1)-(3s)=3s^2,
\end{array}$$

per tant 

$$f_0(s)=1,\quad g_0(s)=3s\quad\text{i}\quad h_0(s)=3s^2.$$\\

 \item Si $\alpha=1,$ és a dir, $n=6\alpha+1,$

      $$\mathcal{A}^{1}(s)=\Big\{(p^i,p^j,p^{6s+1-i-j})\Big\}_{\def\arraystretch{1.3}
	      \begin{array}{l}
					i=0,...,2s\\
					j=i,...,\left[\frac{6s+1-i}{2}\right].
	      \end{array}}$$

Ara,

$$\mathcal{A}_o^{1}(s)=\Big\{(p^{2u},p^j,p^{6s+1-2u-j})\Big\}_{
               \begin{array}{l}
					u=0,...,s\\
					j=2u,...,3s-u
			   \end{array}}$$
			   
i per tant

$$ \displaystyle\sharp\mathcal{A}_o^{1}(s)=\sum_{u=0}^s\;\sum_{j=2u}^{3s-u}1=
               \sum_{u=0}^s 3s-3u+1=3s^2+4s+1-\frac{3s^2+3s}{2}.$$

Fixem-nos que 

$$\mathcal{A}_e^{1}(s)=\Big\{(p^{2u+1},p^j,p^{6s-2u-j})\Big\}_{
               \begin{array}{l}
					u=0,...,s-1\\
					j=2u+1,...,3s-u
			   \end{array}}$$

i per tant

  $$\displaystyle\sharp\mathcal{A}_e^{1}(s)=\sum_{u=0}^{s-1}\;\sum_{j=2u+1}^{3s-u}1=
               \sum_{u=0}^{s-1} 3s-3u=3s^2+\frac{-3s^2+3s}{2}.$$

Així doncs tenim que

$$\sharp{A}^1(s)=\sharp{A}_o^1(s)+\mathcal{A}_e^1(s)=3s^2+4s+1.$$

Considerant la partició \eqref{eqnA_i} de $\mathcal{A}^1(s)$ 
$$\def\arraystretch{2}\begin{array}{l}
  \mathcal{A}_1^{1}(s)=\big\{(p^i,p^i,p^i)\big\}_{3i=6s+1}=\emptyset\\
           
  \mathcal{A}_{2'}^{1}(s)=\big\{(p^i,p^i,p^{6s+1-2i})\big\}_
                 {i=0,...,2s}\\
                 
  \mathcal{A}_{2''}^{1}(s)=\big\{(p^i,p^j,p^j)\big\}_
                 {i+2j=6s+1,\;0\leq i<j}=\big\{(p^{2u+1},p^j,p^j)\big\}_{u=0,...,s-1}\\
                 
  \mathcal{A}_3^{1}(s)=\big\{(p^i,p^j,p^{6s+1-i-j})\big\}_{0\leq i<j<6s+1-i-j}\\
\end{array}$$
tenim que
$$\def\arraystretch{2}\begin{array}{l}
       \sharp\mathcal{A}^1_1(s)=0,\\
       \sharp\mathcal{A}^1_2(s)=\sharp\mathcal{A}^1_{2'}(s)+\sharp\mathcal{A}^1_{2''}(s)=(2s+1)+(s)=3s+1\\
       \sharp\mathcal{A}^1_3(s)=
         \sharp\mathcal{A}^1(s)-\sharp\mathcal{A}^1_1(s)-\sharp\mathcal{A}^1_2(s)=
         3s^2+s,
\end{array}$$

per tant 

$$f_1(s)=0,\quad g_1(s)=3s+1\quad\text{i}\quad h_1(s)=3s^2+s.$$\\

 \item Si $\alpha=2,$ és a dir, $n=6\alpha+2,$

      $$\mathcal{A}^{2}(s)=\Big\{(p^i,p^j,p^{6s+2-i-j})\Big\}_{\def\arraystretch{1.3}
	      \begin{array}{l}
					i=0,...,2s\\
					j=i,...,\left[\frac{6s+2-i}{2}\right].
	      \end{array}}$$

Ara,

$$\mathcal{A}_o^{2}(s)=\Big\{(p^{2u},p^j,p^{6s+2-2u-j})\Big\}_{
               \begin{array}{l}
					u=0,...,s\\
					j=2u,...,3s+1-u
			   \end{array}}$$
			   
i per tant

$$ \displaystyle\sharp\mathcal{A}_o^{2}(s)=\sum_{u=0}^s\;\sum_{j=2u}^{3s+1-u}1=
               \sum_{u=0}^s 3s-3u+2=3s^2+5s+2-\frac{3s^2+3s}{2}.$$

Fixem-nos que 

$$\mathcal{A}_e^{2}(s)=\Big\{(p^{2u+1},p^j,p^{6s+1-2u-j})\Big\}_{
               \begin{array}{l}
					u=0,...,s-1\\
					j=2u+1,...,3s-u
			   \end{array}}$$

i per tant

  $$\displaystyle\sharp\mathcal{A}_e^{2}(s)=\sum_{u=0}^{s-1}\;\sum_{j=2u+1}^{3s-u}1=
               \sum_{u=0}^{s-1} 3s-3u=3s^2+\frac{-3s^2+3s}{2}.$$

Així doncs tenim que

$$\sharp{A}^2(s)=\sharp{A}_o^2(s)+\mathcal{A}_e^2(s)=3s^2+5s+2.$$

Considerant la partició \eqref{eqnA_i} de $\mathcal{A}^2(s)$ 
$$\def\arraystretch{2}\begin{array}{l}
  \mathcal{A}_1^{2}(s)=\big\{(p^i,p^i,p^i)\big\}_{3i=6s+2}=\emptyset\\
           
  \mathcal{A}_{2'}^{2}(s)=\big\{(p^i,p^i,p^{6s+2-2i})\big\}_
                 {i=0,...,2s}\\
                 
  \mathcal{A}_{2''}^{2}(s)=\big\{(p^i,p^j,p^j)\big\}_
                 {i+2j=6s+2,\;0\leq i<j}=\big\{(p^{2u},p^j,p^j)\big\}_{u=0,...,s}\\
                 
  \mathcal{A}_3^{2}(s)=\big\{(p^i,p^j,p^{6s+2-i-j})\big\}_{0\leq i<j<6s+2-i-j}\\
\end{array}$$
tenim que
$$\def\arraystretch{2}\begin{array}{l}
       \sharp\mathcal{A}^2_1(s)=0,\\
       \sharp\mathcal{A}^2_2(s)=\sharp\mathcal{A}^2_{2'}(s)+\sharp\mathcal{A}^2_{2''}(s)=(2s+1)+(s+1)=3s+2\\
       \sharp\mathcal{A}^2_3(s)=
         \sharp\mathcal{A}^2(s)-\sharp\mathcal{A}^2_1(s)-\sharp\mathcal{A}^2_2(s)=
         3s^2+2s,
\end{array}$$

per tant 

$$f_2(s)=0,\quad g_2(s)=3s+2\quad\text{i}\quad h_2(s)=3s^2+2s.$$\\

 \item Si $\alpha=3,$ és a dir, $n=6\alpha+3,$

      $$\mathcal{A}^{3}(s)=\Big\{(p^i,p^j,p^{6s+3-i-j})\Big\}_{\def\arraystretch{1.3}
	      \begin{array}{l}
					i=0,...,2s+1\\
					j=i,...,\left[\frac{6s+3-i}{2}\right].
	      \end{array}}$$

Ara,

$$\mathcal{A}_o^{3}(s)=\Big\{(p^{2u},p^j,p^{6s+3-2u-j})\Big\}_{
               \begin{array}{l}
					u=0,...,s\\
					j=2u,...,3s+1-u
			   \end{array}}$$
			   
i per tant

$$ \displaystyle\sharp\mathcal{A}_o^{3}(s)=\sum_{u=0}^s\;\sum_{j=2u}^{3s+1-u}1=
               \sum_{u=0}^s 3s-3u+2=3s^2+5s+2-\frac{3s^2+3s}{2}.$$

Fixem-nos que 

$$\mathcal{A}_e^{3}(s)=\Big\{(p^{2u+1},p^j,p^{6s+2-2u-j})\Big\}_{
               \begin{array}{l}
					u=0,...,s\\
					j=2u+1,...,3s-u+1
			   \end{array}}$$

i per tant

  $$\displaystyle\sharp\mathcal{A}_e^{3}(s)=\sum_{u=0}^{s}\;\sum_{j=2u+1}^{3s-u+1}1=
               \sum_{u=0}^{s} 3s-3u+1=3s^2+4s+1-\frac{3s^2+3s}{2}.$$

Així doncs tenim que

$$\sharp{A}^3(s)=\sharp{A}_o^3(s)+\mathcal{A}_e^3(s)=3s^2+6s+3.$$

Considerant la partició \eqref{eqnA_i} de $\mathcal{A}^3(s)$ 
$$\def\arraystretch{2}\begin{array}{l}
  \mathcal{A}_1^{3}(s)=\big\{(p^i,p^i,p^i)\big\}_{3i=6s+3}=
      \big\{(p^{2s+1},p^{2s+1},p^{2s+1})\big\}\\
           
  \mathcal{A}_{2'}^{3}(s)=\big\{(p^i,p^i,p^{6s+3-2i})\big\}_
                 {i=0,...,2s}\\
                 
  \mathcal{A}_{2''}^{3}(s)=\big\{(p^i,p^j,p^j)\big\}_
                 {i+2j=6s+3,\;0\leq i<j}=\big\{(p^{2u+1},p^j,p^j)\big\}_{u=0,...,s-1}\\
                 
  \mathcal{A}_3^{3}(s)=\big\{(p^i,p^j,p^{6s+3-i-j})\big\}_{0\leq i<j<6s+3-i-j}\\
\end{array}$$
tenim que
$$\def\arraystretch{2}\begin{array}{l}
       \sharp\mathcal{A}^3_1(s)=1,\\
       \sharp\mathcal{A}^3_2(s)=\sharp\mathcal{A}^3_{2'}(s)+\sharp\mathcal{A}^3_{2''}(s)=(2s+1)+(s)=3s+1\\
       \sharp\mathcal{A}^3_3(s)=
         \sharp\mathcal{A}^3(s)-\sharp\mathcal{A}^3_1(s)-\sharp\mathcal{A}^3_2(s)=
         3s^2+3s+1,
\end{array}$$

per tant 

$$f_3(s)=1,\quad g_3(s)=3s+1\quad\text{i}\quad h_3(s)=3s^2+3s+1.$$\\

 \item Si $\alpha=4,$ és a dir, $n=6\alpha+4,$

      $$\mathcal{A}^{4}(s)=\Big\{(p^i,p^j,p^{6s+4-i-j})\Big\}_{\def\arraystretch{1.3}
	      \begin{array}{l}
					i=0,...,2s+1\\
					j=i,...,\left[\frac{6s+4-i}{2}\right].
	      \end{array}}$$

Ara,

$$\mathcal{A}_o^{4}(s)=\Big\{(p^{2u},p^j,p^{6s+4-2u-j})\Big\}_{
               \begin{array}{l}
					u=0,...,s\\
					j=2u,...,3s+2-u
			   \end{array}}$$
			   
i per tant

$$ \displaystyle\sharp\mathcal{A}_o^{4}(s)=\sum_{u=0}^s\;\sum_{j=2u}^{3s+2-u}1=
               \sum_{u=0}^s 3s-3u+3=3s^2+6s+3-\frac{3s^2+3s}{2}.$$

Fixem-nos que 

$$\mathcal{A}_e^{4}(s)=\Big\{(p^{2u+1},p^j,p^{6s+3-2u-j})\Big\}_{
               \begin{array}{l}
					u=0,...,s\\
					j=2u+1,...,3s-u+1
			   \end{array}}$$

i per tant

  $$\displaystyle\sharp\mathcal{A}_e^{4}(s)=\sum_{u=0}^{s}\;\sum_{j=2u+1}^{3s-u+1}1=
               \sum_{u=0}^{s} 3s-3u+1=3s^2+4s+1-\frac{3s^2+3s}{2}.$$

Així doncs tenim que

$$\sharp{A}^4(s)=\sharp{A}_o^4(s)+\mathcal{A}_e^4(s)=3s^2+7s+4.$$

Considerant la partició \eqref{eqnA_i} de $\mathcal{A}^4(s)$ 
$$\def\arraystretch{2}\begin{array}{l}
  \mathcal{A}_1^{4}(s)=\big\{(p^i,p^i,p^i)\big\}_{3i=6s+4}=\emptyset\\
           
  \mathcal{A}_{2'}^{4}(s)=\big\{(p^i,p^i,p^{6s+4-2i})\big\}_
                 {i=0,...,2s+1}\\
                 
  \mathcal{A}_{2''}^{4}(s)=\big\{(p^i,p^j,p^j)\big\}_
                 {i+2j=6s+4,\;0\leq i<j}=\big\{(p^{2u},p^j,p^j)\big\}_{u=0,...,s}\\
                 
  \mathcal{A}_3^{4}(s)=\big\{(p^i,p^j,p^{6s+4-i-j})\big\}_{0\leq i<j<6s+4-i-j}\\
\end{array}$$
tenim que
$$\def\arraystretch{2}\begin{array}{l}
       \sharp\mathcal{A}^4_1(s)=0,\\
       \sharp\mathcal{A}^4_2(s)=\sharp\mathcal{A}^4_{2'}(s)+\sharp\mathcal{A}^4_{2''}(s)=(2s+2)+(s+1)=3s+3\\
       \sharp\mathcal{A}^4_3(s)=
         \sharp\mathcal{A}^4(s)-\sharp\mathcal{A}^4_1(s)-\sharp\mathcal{A}^4_2(s)=
         3s^2+4s+1,
\end{array}$$

per tant 

$$f_4(s)=0,\quad g_4(s)=3s+3\quad\text{i}\quad h_4(s)=3s^2+4s+1.$$\\

 \item Si $\alpha=5,$ és a dir, $n=6\alpha+5,$

      $$\mathcal{A}^{5}(s)=\Big\{(p^i,p^j,p^{6s+5-i-j})\Big\}_{\def\arraystretch{1.3}
	      \begin{array}{l}
					i=0,...,2s+1\\
					j=i,...,\left[\frac{6s+5-i}{2}\right].
	      \end{array}}$$

Ara,

$$\mathcal{A}_o^{5}(s)=\Big\{(p^{2u},p^j,p^{6s+5-2u-j})\Big\}_{
               \begin{array}{l}
					u=0,...,s\\
					j=2u,...,3s+2-u
			   \end{array}}$$
			   
i per tant

$$ \displaystyle\sharp\mathcal{A}_o^{5}(s)=\sum_{u=0}^s\;\sum_{j=2u}^{3s+2-u}1=
               \sum_{u=0}^s 3s-3u+3=3s^2+6s+3-\frac{3s^2+3s}{2}.$$

Fixem-nos que 

$$\mathcal{A}_e^{5}(s)=\Big\{(p^{2u+1},p^j,p^{6s+4-2u-j})\Big\}_{
               \begin{array}{l}
					u=0,...,s\\
					j=2u+1,...,3s-u+2
			   \end{array}}$$

i per tant

  $$\displaystyle\sharp\mathcal{A}_e^{5}(s)=\sum_{u=0}^{s}\;\sum_{j=2u+1}^{3s-u+2}1=
               \sum_{u=0}^{s} 3s-3u+2=3s^2+5s+2-\frac{3s^2+3s}{2}.$$

Així doncs tenim que

$$\sharp{A}^5(s)=\sharp{A}_o^5(s)+\mathcal{A}_e^5(s)=3s^2+8s+5.$$

Considerant la partició \eqref{eqnA_i} de $\mathcal{A}^5(s)$ 
$$\def\arraystretch{2}\begin{array}{l}
  \mathcal{A}_1^{5}(s)=\big\{(p^i,p^i,p^i)\big\}_{3i=6s+5}=\emptyset\\
           
  \mathcal{A}_{2'}^{5}(s)=\big\{(p^i,p^i,p^{6s+5-2i})\big\}_
                 {i=0,...,2s+1}\\
                 
  \mathcal{A}_{2''}^{5}(s)=\big\{(p^i,p^j,p^j)\big\}_
                 {i+2j=6s+5,\;0\leq i<j}=\big\{(p^{2u+1},p^j,p^j)\big\}_{u=0,...,s}\\
                 
  \mathcal{A}_3^{5}(s)=\big\{(p^i,p^j,p^{6s+5-i-j})\big\}_{0\leq i<j<6s+5-i-j}\\
\end{array}$$
tenim que
$$\def\arraystretch{2}\begin{array}{l}
       \sharp\mathcal{A}^5_1(s)=0,\\
       \sharp\mathcal{A}^5_2(s)=\sharp\mathcal{A}^5_{2'}(s)+\sharp\mathcal{A}^5_{2''}(s)=(2s+2)+(s+1)=3s+3\\
       \sharp\mathcal{A}^5_3(s)=
         \sharp\mathcal{A}^5(s)-\sharp\mathcal{A}^5_1(s)-\sharp\mathcal{A}^5_2(s)=
         3s^2+5s+2,
\end{array}$$

per tant 

$$f_5(s)=0,\quad g_4(s)=3s+3\quad\text{i}\quad h_5(s)=3s^2+5s+2.$$\\

\end{itemize}
\end{proof}

A continuació detallem la prova del teorema \ref{theorempq}.\\

\begin{proof}

Si $N=p^{n}\cdot q^m$ amb $p\neq q$ nombres primers i $n,m\in\NN,$ escrivim $n=6s+\alpha$ i $m=6t+\beta$ amb $s,t\in\NN,\;$ $\alpha,\beta=0,1,2,3,4,5.$ Llavors,

$$\begin{array}{rl}
	\mathcal{O}(N)=&\sharp\;\big\{\;\{\mathcal{A}_1^{\alpha}(s)\uplus
	     \mathcal{A}_2^{\alpha}(s)\uplus\mathcal{A}_3^{\alpha}(s)\}\;\otimes\;
	     \{\mathcal{A}_1^{\beta}(t)\uplus\mathcal{A}_2^{\beta}(t)\uplus
	     \mathcal{A}_3^{\beta}(t)\}\;\Big\}\\
    =&\displaystyle\sum_{i=1}^3\sum_{j=1}^3\sharp\{\mathcal{A}_i^{\alpha}(s)\;\otimes\;\mathcal{A}_j^{\beta}(t)\},
  \end{array}$$

ja que els conjunts són disjunts.\\

Utilitzant el Lema \ref{lemmaproducte} tenim que 

\begin{equation}\label{eqnO(N)ambfgh}
  \begin{array}{rl}
    \mathcal{O}(N)=&6\cdot h_{\alpha}(s)\cdot h_{\beta}(t)+
         3\cdot(g_{\alpha}(s)\cdot h_{\beta}(t)+h_{\alpha}(s)\cdot g_{\beta}(t))+\\
     &+\;2\cdot g_{\alpha}(s)\cdot g_{\beta}(t)+f_{\alpha}(s)\cdot f_{\beta(t)}+\\
     &+\;f_{\alpha}(s)\cdot(g_{\beta}(t)+h_{\beta}(t))+
       f_{\beta}(t)\cdot(g_{\alpha}(s)+h_{\alpha}(s)).
  \end{array}
\end{equation}\\

Utilitzant les fórmules obtingudes al Lema \ref{Lemafgh} el resultat és immediat.

\end{proof}

\section{Prova del Teorema \ref{maintheorem}}\label{proofmaintheorem}

Donat $N\in\NN$ considerem la seva descomposició factorial $N=p_1^{n_1}\cdot...\cdot p_n^{n_k},$ amb $p_1,...,p_n$ nombres primers diferents i $n_1,...,n_k\in\Zplus.$ 
Escrivim 
$$n_i=6s_i+\alpha_i\quad \text{per certs}\quad s_i\in\NN\quad \text{i}\quad \alpha_i=0,...,5\quad \text{per}\quad i=1,...,k$$ 
i definim els vectors
$$\alpha^i=(\alpha_1,...,\alpha_i)\quad\text{i}\quad s^i=(s_1,...,s_i),\quad\text{per}\quad i=1,...,k.$$
Definim les funcions
$$f_{\alpha^1}(t^1):=f_{\alpha_1}(t),\quad g_{\alpha^1}(t^1):=g_{\alpha_1}(t),\quad g_{\alpha^1}(t^1):=g_{\alpha_1}(t),$$
on $f_{\alpha_1}(t),\;g_{\alpha_1}(t),\;h_{\alpha_1}(t)$ són les funcions definides a \eqref{eqn fgh} i $t^1=(t)$ és un vector unidimensional qualsevol de coordenada $t$.\\

Observem que 
\begin{itemize}
\item $f_{\alpha_1}(s_1)$ correspon al nombre de tripletes\\
       $\def\arraystretch{1.3}\begin{array}{l}
         \quad\quad (A,A,A)\text{ amb }\text{ amb }A\in\Zplus,\;A^3=p_1^{6s_1+\alpha_1}.
        \end{array}$
\item $g_{\alpha_1}(s_1)$ correspon al nombre de tripletes\\
       $\def\arraystretch{1.3}\begin{array}{l}
         \quad\quad(A,A,B)\text{ amb }\text{ amb }A,B\in\Zplus,\;A<B\text{ i }A^2\cdot B=p_1^{6s_1+\alpha_1}\text{ o bé}\\
         \quad\quad(A,B,B)\text{ amb }\text{ amb }A,B\in\Zplus,\;A<B\text{ i }A\cdot B^2=p_1^{6s_1+\alpha_1}.
       \end{array}$
\item $h_{\alpha_1}(s_1)$ correspon al nombre de tripletes\\ 
       $\def\arraystretch{1.3}\begin{array}{l}
         \quad\quad (A,B,C)\text{ amb }\text{ amb }A<B<C\in\Zplus,\text{ i }A\cdot B\cdot C=p_1^{6s_1+\alpha_1}.
        \end{array}$
\end{itemize}

Definim de manera recursiva les funcions
\begin{equation}\label{eqnfk,gk,hk}
  \begin{array}{ll}
    f_{\alpha^i}(t^i)=&f_{\alpha^{i-1}}(t^{i-1})\cdot f_{\alpha_i}(t_i),\\\\
    g_{\alpha^i}(t^i)=&f_{\alpha^{i-1}}(t^{i-1})\cdot g_{\alpha_i}(t_i)+
                       g_{\alpha^{i-1}}(t^{i-1})\cdot (f_{\alpha_i}(t_i)+g_{\alpha_i}(t_i)).\\\\
    h_{\alpha^i}(t^i)=&6\cdot h_{\alpha^{i-1}}(t^{i-1})\cdot h_{\alpha_i}(t_i)+
                       g_{\alpha^{i-1}}(t^{i-1})\cdot g_{\alpha_i}(t_i)+\\\\
                      &f_{\alpha^{i-1}}(t^{i-1})\cdot h_{\alpha_i}(t_i)+
                       h_{\alpha^{i-1}}(t^{i-1})\cdot f_{\alpha_i}(t_i)+\\\\
                      &3\cdot(g_{\alpha^{i-1}}(t^{i-1})\cdot h_{\alpha_i}(t_i)+
                       h_{\alpha^{i-1}}(t^{i-1})\cdot g_{\alpha_i}(t_i)). 
  \end{array}
\end{equation}

per $i=2,...,k$, on $t^i=(t_1,...,t_i)$ és un vector de dimensió $i$. Observem que 
\begin{itemize}
\item $f_{\alpha^i}(s^i)$ corresponen al nombre de tripletes\\
       $\def\arraystretch{1.3}\begin{array}{l}
         \quad\quad (A,A,A)\text{ amb }A\in\Zplus,\; A^3=p_1^{6s_1+\alpha_1}\cdot ... \cdot p_i^{6s_i+\alpha_i}.
        \end{array}$
\item $g_{\alpha^i}(s^i)$ correspon al nombre de tripletes\\
       $\def\arraystretch{1.3}\begin{array}{l}
         \quad\quad(A,A,B)\text{ amb }A<B\in\Zplus\text{ i }A^2\cdot B=p_1^{6s_1+\alpha_1}\cdot ... \cdot p_i^{6s_i+\alpha_i}\text{ o bé}\\
         \quad\quad(A,B,B)\text{ amb }A<B\in\Zplus\text{ i }A\cdot B^2=p_1^{6s_1+\alpha_1}\cdot ... \cdot p_i^{6s_i+\alpha_i}.
       \end{array}$
\item $h_{\alpha^i}(s^i)$ correspon al nombre de tripletes\\ 
       $\def\arraystretch{1.3}\begin{array}{l}
         \quad\quad (A,B,C)\text{ amb }A<B<C\in\Zplus\text{ i }A\cdot B\cdot C=p_1^{6s_1+\alpha_1}\cdot ... \cdot p_i^{6s_i+\alpha_i}.
        \end{array}$
\end{itemize}       

Per tant $\mathcal{O}(N)=f_{\alpha^k}(s^k)+g_{\alpha^k}(s^k)+h_{\alpha^k}(s^k).$

\section{Exemple}\label{section exemples}

Anem a veure com calcular $\mathcal{O}(N)$ en un cas concret.

\begin{itemize}

\item Si $N=2^6\cdot 3^7\cdot 5^6 = 2.187.000.000$ aleshores 
     $$n_1=6,\quad n_2=6+1\quad\text{i}\quad n_3=6.$$ 
Per tant, 
  $$s_1=1,\;\alpha_1=0,\;s_2=1,\;\alpha_2=1,\; s_3=1,\text{ i }\alpha_3=0.$$
Així doncs 
   $$\def\arraystretch{1.3}\begin{array}{l}
      f_{\alpha^1}(s^1)=f_{\alpha_1}(s_1)=f_0(1)=1,\\
      g_{\alpha^1}(s^1)=g_{\alpha_1}(s_1)=g_0(1)=3\cdot 1=3,\\
      h_{\alpha^1}(s^1)=h_{\alpha_1}(s_1)=h_0(1)=3\cdot 1^2=3.\\
     \end{array}$$
També 
   $$\def\arraystretch{1.3}\begin{array}{l}
     f_{\alpha_2}(s_2)=f_1(1)=0,\\
     g_{\alpha_2}(s_2)=g_1(1)=3\cdot 1 + 1 = 4,\\
     h_{\alpha_2}(s_2)=h_1(1)=3\cdot 1^2 + 1 = 4,\\
     f_{\alpha_3}(s_3)=f_0(1)=1,\\
     g_{\alpha_3}(s_3)=g_0(1)=3\cdot 1 = 3,\\
     h_{\alpha_3}(s_3)=h_0(1)=3\cdot 1^2 = 3.\\
   \end{array}$$
   
Ara, 
  $$\alpha^2=(\alpha_1,\alpha_2)=(0,1)\quad\text{i}\quad s^2=(s_1,s_2)=(1,1)$$
i llavors 
   $$\def\arraystretch{1.3}\begin{array}{rl}
      f_{\alpha^2}(s^2)=&\!\!\!\!f_{\alpha^1}(s^1)\cdot f_{\alpha_2}(s_2)=1\cdot 0=0,\\
      g_{\alpha^2}(s^2)=&\!\!\!\!f_{\alpha^{1}}(s^{1})\cdot g_{\alpha_2}(s_2)+
                       g_{\alpha^{1}}(s^{1})\cdot (f_{\alpha_2}(s_2)+g_{\alpha_2}(s_2))=\\
                      =&\!\!\!\! 1\cdot 4 + 3\cdot(0+4)=16\\
      h_{\alpha^2}(s^2)=&6\cdot h_{\alpha^{1}}(s^{1})\cdot h_{\alpha_2}(s_2)+
                         g_{\alpha^{1}}(s^{1})\cdot g_{\alpha_2}(s_2)+\\
                        &f_{\alpha^{1}}(s^{1})\cdot h_{\alpha_2}(s_2)+
                         h_{\alpha^{1}}(s^{1})\cdot f_{\alpha_2}(s_2)+\\
                        &3\cdot(g_{\alpha^{1}}(s^{1})\cdot h_{\alpha_2}(s_2)+
                         h_{\alpha^{1}}(s^{1})\cdot g_{\alpha_2}(s_2))=\\
                       =&6\cdot 3\cdot 4 + 3\cdot 4+1\cdot 4+1\cdot 0+3\cdot(3\cdot 4+3\cdot 4)=\\
                       =&72+12+4+0+72=160
   \end{array}$$
Finalment considerem
  $$\alpha^3=(\alpha_1,\alpha_2,\alpha_3)=(0,1,0)\quad\text{i}\quad s^3=(s_1,s_2,s_3)=(1,1,1)$$
i llavors
   $$\def\arraystretch{1.3}\begin{array}{rl}
      f_{\alpha^3}(s^3)=&\!\!\!\!f_{\alpha^2}(s^2)\cdot f_{\alpha_3}(s_3)=0\cdot 1=0,\\
      g_{\alpha^3}(s^3)=&\!\!\!\!f_{\alpha^2}(s^2)\cdot g_{\alpha_3}(s_3)+
                       g_{\alpha^{2}}(s^{2})\cdot (f_{\alpha_3}(s_3)+g_{\alpha_3}(s_3))=\\
                      =&\!\!\!\! 0\cdot 3 + 16\cdot(1+3)=64\\
      h_{\alpha^3}(s^3)=&6\cdot h_{\alpha^2}(s^2)\cdot h_{\alpha_3}(s_3)+
                         g_{\alpha^2}(s^2)\cdot g_{\alpha_3}(s_3)+\\
                        &f_{\alpha^2}(s^2)\cdot h_{\alpha_3}(s_3)+
                         h_{\alpha^2}(s^2)\cdot f_{\alpha_3}(s_3)+\\
                        &3\cdot(g_{\alpha^2}(s^2)\cdot h_{\alpha_3}(s_3)+
                         h_{\alpha^2}(s^2)\cdot g_{\alpha_3}(s_3))=\\
                       =&6\cdot 160\cdot 3 + 16\cdot 3+0\cdot 3+160\cdot 1+3\cdot(16\cdot 3+160\cdot 3)=\\
                       =&2880+48+0+160+1584=4672.
   \end{array}$$
Per tant,

  $$\mathcal{O}(N)=f_{\alpha^3}(s^3)+g_{\alpha^3}(s^3)+h_{\alpha^3}(s^3)=0+64+4672=4736.$$  
\end{itemize}

\bibliographystyle{plain}
\bibliography{Policubs}

\end{document}